\documentclass{amsart}
\usepackage{ amsfonts,amssymb,amsmath,amsthm}
\usepackage{url}
\usepackage{enumerate}
\usepackage{graphicx}
\urlstyle{sf}
\newtheorem{thrm}{Theorem}[section]
\newtheorem{lem}[thrm]{Lemma}

\theoremstyle{definition}

\newtheorem{remark}[thrm]{Remark}
\numberwithin{equation}{section}

\author[M. Ahanjideh]{Milad~Ahanjideh}
\address{
Department of Mathematics,
Faculty of Mathematical Sciences, Tarbiat Modares University, Tehran, Iran.}
\email{ahanjidm@gmail.com}
\author[E. Aboomahigir]{Elham~Aboomahigir}
\address{
Department of Mathematics,
Faculty of Mathematical Sciences, Tarbiat Modares University, Tehran, Iran.}
\email{mahigir.elham@gmail.com}

\keywords{Cubic graph, Hoffmann-Ostenhof's Conjecture, Claw-free graph.}
\subjclass{05C70, 05C05.}
\begin{document}

\title[Hoffmann-Ostenhof's conjecture for claw-free cubic graphs]{Hoffmann-Ostenhof's conjecture for claw-free cubic graphs}

\begin{abstract}
Hoffmann-Ostenhof's Conjecture states that the edge set of every connected cubic graph can be decomposed into a spanning tree, a matching and a $2$-regular subgraph. In this paper, we show that the conjecture holds for claw-free cubic graphs.
\end{abstract}
\maketitle

\section{Introduction} \label{sect1}
Graphs are finite without loops and multiple edges throughout this paper.
 Let $G$ be a finite undirected graph with the vertex set $V(G)$ and the edge set $E(G)$. For a vertex $v\in V(G)$, the degree of $v$ in $G$ is denoted by $d_G(v)$.
 An edge joining vertices $u$ and $v$ is denoted by $uv$. Here, $N_G(v)$ denotes the set of all neighbours of $v$. 
 The complete graph of order $n$ is denoted by $K_n$. The complete graph on four vertices minus one edge is called a \textit{diamond}. A graph is \textit{cubic} in which all vertices have degree three and a \textit{subcubic} graph is a graph in which each vertex has degree at most three. A graph is \textit{claw-free} if it has no induced subgraph isomorphic to $K_{1,3}$. A cycle $C$ is called \textit{chordless} if $C$ has no cycle chord. For every $S\subseteq V(G)$, the graph obtained by removing all vertex in $S$ and all associated incident edges is denoted by $G\setminus S$. 
 If $X$ is a subset of the edges, then $G \setminus X$ is the graph obtained by removing all edges in $X$ from $G$. A \textit{cut-edge} of a connected graph $G$ is an edge $e\in E(G)$ such that $G\setminus e$ is disconnected.
 \\
 Now, we adopt some terminology from \cite{Henning} in which we need for the proof of the main theorem.
 For convenience, we repeat their definitions here.\\ Let $k\geq 2$ be an integer, a {\it diamond-necklace} $N_k$ with $k$ diamonds is a connected cubic graph constructed as follows. Take $k$ disjoint copies $D_1, D_2,\ldots , D_k$ of a diamond, where $V(D_i) = \{a_i, b_i, c_i, d_i\}$ and $a_id_i$ is the missing edge in $D_i$. Let $N_k$ be obtained from the disjoint union of these $k$ diamonds by adding the edges $\{d_{i}a_{i+1} |\  i = 1, 2,\ldots , k-1\}$ and adding the edge $d_ka_1$, see Figure \ref{from the left side: A diamond-necklace $N_7$; A diamond-bracelet $B_6$; A diamond-chain $L_2$}.\\ For $k \geq 1$, a {\it diamond-bracelet} $B_k$ with $k$ diamonds is defined as follows. Let $B_k$ be obtained from a diamond-necklace $N_{k+1}$ with $k+1$ diamonds $D_1, D_2,\ldots , D_{k+1}$ by removing the diamond $D_{k+1}$ and adding a triangle $T$ with $V(T) = \{a, b, c\}$, and adding two edges $ba_1$ and $ad_k$, see Figure \ref{from the left side: A diamond-necklace $N_7$; A diamond-bracelet $B_6$; A diamond-chain $L_2$}.\\ For $k\geq 1$, a {\it diamond-chain} $L_k$ with $k$ diamonds is defined as follows. 
Let $L_k$ be obtained from $k$ disjoint copies $D_1, D_2,\ldots , D_k$ of a diamond by adding the edges $\{d_ia_{i+1}|\ i=1,\ldots, k-1\}$. After that we add two disjoint triangles $T_1$ and $T_2$ and add one edge joining $a_1$ to a vertex of $T_1$, and one edge joining $d_k$ to a vertex of $T_2$, see Figure \ref{from the left side: A diamond-necklace $N_7$; A diamond-bracelet $B_6$; A diamond-chain $L_2$}.

\begin{figure}[h!]
\centering
\includegraphics[scale=0.5]{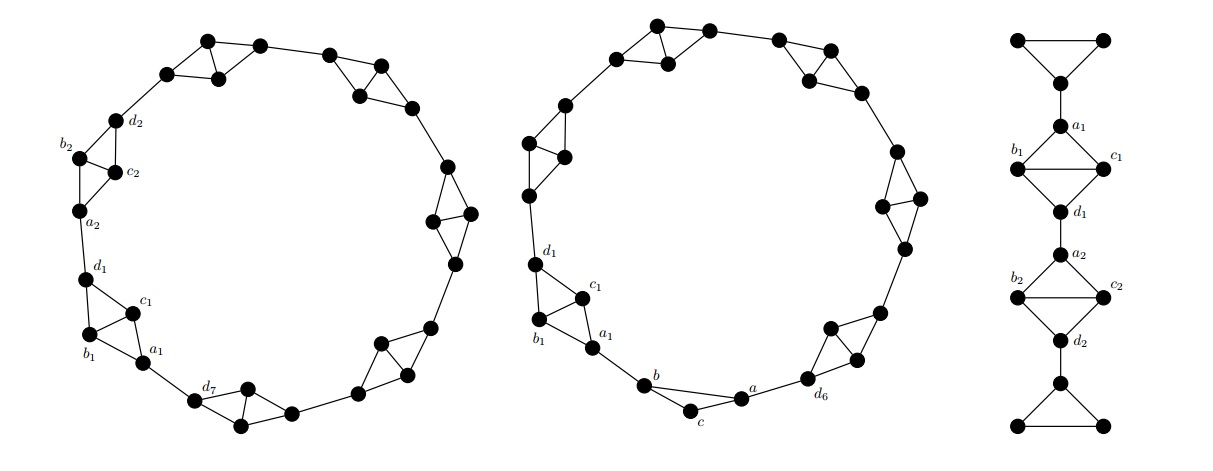}
\caption{\footnotesize {From the left side: A diamond-necklace $N_7$; A diamond-bracelet $B_6$; A diamond-chain $L_2$.}}
 \label{from the left side: A diamond-necklace $N_7$; A diamond-bracelet $B_6$; A diamond-chain $L_2$}
\end{figure}
The following conjecture was posed in \cite{Hoff1} and appeared as a problem in BCC22 \cite{Cameron}:\\
\\
\textbf{Hoffmann-Ostenhof's Conjecture.} Let $G$ be a connected cubic graph. Then the edges of $G$ can be decomposed into a spanning tree, a matching and a $2$-regular subgraph.\\
\\
Note that the spanning tree or the $2$-regular subgraph cannot be empty, however the matching may be empty.
An edge decomposition of a graph $G$ is called a {\it good decomposition}, if the edges of $G$ can be decomposed into a spanning tree, a matching and a $2$-regular subgraph. A graph is called {\it good} if it has a good decomposition. Throughout, we use $\{T, M, O\}$ to denote the spanning tree, the matching and the $2$-regular subgraph of the good decomposition of $G$, respectively. 
\\ Hoffmann-Ostenhof's Conjecture is known to be true for some families of cubic graphs. Kostochka \cite{Kostochka} showed that the Petersen graph, the prism over cycles, and many other graphs are good. Bachstein \cite{Bachstein} proved that every $3$-connected cubic graph embedded in torus or Klein-bottle is good. Furthermore, Ozeki and Ye \cite{Ozeki} proved that $3$-connected cubic plane graphs are good. Akbari et. al. \cite{Akbari} showed that hamiltonian cubic graphs are good. Also, it has been proved that traceable cubic graphs are good \cite{Abdolhosseini}. In $2017$, Hoffmann-Ostenhof et. al. \cite{Hoff3} proved that planar cubic graphs are good.
Recently in  \cite{aaa} the authors proved that claw-free subcubic graphs and $4$-chordal subcubic graphs have the same property.
In this paper, we are interested in finding a good decomposition of claw-free cubic graphs. We prove the following theorem.
\\
\\
\textbf{Main Theorem.}\label{B}
Let $G$ be a connected claw-free cubic graph. Then $G$ is good.

\section{Preliminary Results}
In order to prove the main theorem, we need to know a few basic properties about  claw-free cubic graphs.
\begin{lem}\rm \cite [Claim A]{Henning}
\label{lem1}
If $G\neq K_4$ is a connected claw-free cubic graph, then the vertex set $V(G)$ can be uniquely partitioned into sets each of which induces a triangle or a diamond in $G$.
\end{lem}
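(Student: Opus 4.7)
The plan is to prove the lemma by a direct local analysis of the neighborhood of each vertex. For each vertex $v$ with $N(v) = \{u_1, u_2, u_3\}$, let $\tau(v)$ denote the number of edges in the induced subgraph $G[N(v)]$. Claw-freeness forces $\tau(v) \geq 1$ (else $\{v\} \cup N(v)$ would induce a $K_{1,3}$), and $\tau(v) = 3$ would make $\{v\} \cup N(v)$ an induced $K_4$; since $G$ is connected and cubic, every vertex in this $K_4$ already has its full complement of neighbors, forcing $G = K_4$, contrary to hypothesis. Hence $\tau(v) \in \{1,2\}$ for every $v$.

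Next I assign to each vertex $v$ a canonical set $S(v)$ intended to be the piece of the partition containing $v$. If $\tau(v) = 2$, then $\{v\} \cup N(v)$ induces a diamond (the fourth potential edge must be absent, lest we get $K_4$ again); set $S(v)$ to be this diamond. If $\tau(v) = 1$, write the unique edge in $N(v)$ as $u_1u_2$ and let $w$ be the third neighbor of $u_1$. A short check shows that the only additional edge that can occur in $N(u_1)$ is $u_2w$, because $vw \in E(G)$ would force $w$ to be the third neighbor $u_3$ of $v$, adding the edge $u_1u_3$ to $N(v)$ and contradicting $\tau(v) = 1$. So either $u_2w \in E(G)$ and $\{v, u_1, u_2, w\}$ induces a diamond with $v$ as a tip, or, arguing symmetrically at $u_2$, no such extension exists and $\{v, u_1, u_2\}$ is a standalone triangle. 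Define $S(v)$ accordingly as that diamond or that triangle.

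The heart of the proof is verifying that $S(u) = S(v)$ for every $u \in S(v)$, which is what makes the $S(v)$'s into genuine pieces of a partition. Concretely, one checks that the two hubs of a diamond pair with each other (each has $\tau = 2$ and the construction recovers the same diamond), and that a tip $t$ of a diamond $D$ also recovers $D$: its two triangle-partners are precisely the two hubs of $D$, which have $\tau = 2$, pushing $t$ into the diamond branch of the definition. Once this consistency is established, disjointness of distinct pieces is immediate, since $u \in S(v) \cap S(v')$ forces $S(v) = S(u) = S(v')$; thus the $S(v)$'s partition $V(G)$. Uniqueness is then automatic because $v \mapsto S(v)$ was defined canonically from local data. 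I expect the main obstacle to be the consistency check in the tip case, as one must rule out the possibility that a single triangle is thickened to a diamond in two genuinely different ways, i.e., that the witness $w$ is uniquely determined by the triangle.
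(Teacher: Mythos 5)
The paper itself gives no proof of this lemma; it is quoted as Claim A of Henning and L\"owenstein, so your argument has to stand entirely on its own. The existence half does: the parameter $\tau(v)\in\{1,2\}$, the definition of $S(v)$ in each case, the uniqueness of the witness $w$ extending a triangle to a diamond (your ``main obstacle,'' which you correctly resolve, since any extending vertex must be adjacent to $u_1$ and $u_2$ and is therefore the common third neighbour), and the consistency check $S(u)=S(v)$ for $u\in S(v)$ all go through as sketched. This yields a partition of $V(G)$ into induced triangles and diamonds.

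The genuine gap is uniqueness. The sentence ``uniqueness is then automatic because $v\mapsto S(v)$ was defined canonically from local data'' proves the wrong thing: canonicity shows that \emph{your} construction is well-defined and produces \emph{a} partition, but it says nothing about whether a \emph{different} partition into induced triangles and diamonds could also exist. What is missing is a proof that for any such partition $\mathcal{P}$ and any vertex $v$, the piece of $\mathcal{P}$ containing $v$ must equal $S(v)$. The dangerous alternative is a partition that uses one of the two triangles sitting inside a diamond: if $D=\{t_1,h_1,h_2,t_2\}$ is a diamond of $G$ with hubs $h_1,h_2$, a competing partition might try to take $\{t_1,h_1,h_2\}$ as a piece. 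Ruling this out requires a short but real argument: the other tip $t_2$ has $N(t_2)=\{h_1,h_2,z\}$ with $z\notin D$, and any triangle or diamond containing $t_2$ must contain at least two neighbours of $t_2$; once $h_1,h_2$ are used up by the piece $\{t_1,h_1,h_2\}$, only $z$ remains, so $t_2$ cannot be covered. Combined with two facts that follow from your local analysis but should be stated --- every triangle of $G$ is either a standalone $S$-triangle or one of the two triangles inside an $S$-diamond, and the only induced diamond containing a given vertex $v$ is $S(v)$ --- this forces every piece of every partition to be some $S(v)$, which is the uniqueness assertion. Without an argument of this kind, the lemma as stated, with ``uniquely,'' is not proved.
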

In the next two lemmas, we discuss some properties of a good decomposition of a claw-free cubic graph.
\begin{lem}
\label{lem2}
Let $G$ be a connected claw-free cubic graph which has a good decomposition and let $C$ be a cycle in $G\setminus T$, where $T$ is a spanning tree of the good decomposition of $G$. Then $C$ is either a triangle or the following statements hold for $C$:
\begin{itemize}
\item[(i)] $C$ is a chordless cycle. 
\item[(ii)] There is no vertex of $C$ in a diamond of $G$.
\item[(iii)] $C$ is an even cycle.
\end{itemize}
\end{lem}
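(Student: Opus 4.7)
The plan is to first pin down what the degree profile $(d_T(v), d_M(v), d_O(v))$ can be at each vertex $v$, since this controls the structure of any cycle of $G\setminus T$. Because $G$ is cubic with $d_T(v)+d_M(v)+d_O(v)=3$, $M$ a matching (so $d_M(v)\leq 1$), $O$ a $2$-regular subgraph (so $d_O(v)\in\{0,2\}$), and $T$ a spanning tree (so $d_T(v)\geq 1$), the only admissible triples are $(1,0,2)$, $(2,1,0)$, and $(3,0,0)$. Vertices of profile $(2,1,0)$ contribute only a matching edge to $G\setminus T$, so the cycles of $G\setminus T = M\cup O$ are precisely the cycles of $O$; hence $C$ is a cycle of $O$, and every vertex of $C$ has profile $(1,0,2)$, making it a leaf of $T$ not incident to any edge of $M$.

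For \textbf{(i)}, I would observe that any chord $uv$ of $C$ must lie in $T$: the endpoints $u,v$ have their $O$-degree already saturated by the two $C$-edges at them and their $M$-degree is $0$, leaving $T$ as the only option. But then $u$ and $v$ are leaves of $T$ whose unique tree-neighbor is each other, forcing the tree-component of $u$ to be $\{u,v\}$; this contradicts $T$ being a spanning tree of $G$, since $|V(G)|\geq 4$.

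For \textbf{(ii)}, I would assume some vertex of $C$ lies in a diamond $D$ with vertex set $\{a,b,c,d\}$ and missing edge $ad$, and run a case analysis on $S:=V(C)\cap V(D)$. Because $b$ and $c$ have all three neighbors inside $D$ while $a$ and $d$ each have one external neighbor, purely the $d_O$-constraints on each vertex of $D$ eliminate every nonempty $S$ except $\{a,b,c\}$, $\{b,c,d\}$, $\{a,b,d\}$, $\{a,c,d\}$, and $\{a,b,c,d\}$. In the first two, the only consistent choice forces $C$ to be the triangle $abc$ or $bcd$, matching the alternative clause of the lemma. In $S=\{a,b,d\}$ (symmetrically $\{a,c,d\}$), the non-$O$ edges at $a$, $b$, $d$ are $ac$, $bc$, $cd$ respectively, and because each of $a,b,d$ has profile $(1,0,2)$ these three edges all lie in $T$; this makes $c$ a degree-$3$ vertex of $T$ whose only tree-neighbors are the leaves $a,b,d$, so the $T$-component of $c$ is $\{a,b,c,d\}$, contradicting spanning (the external neighbors of $a$ and $d$ are not in this component). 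In $S=\{a,b,c,d\}$, the $d_O$-bookkeeping $2e_D+e_E=8$ for edges of $O$ inside $D$ forces $C$ to use either $4$ or $3$ of the five diamond edges, so at least one diamond edge joining two vertices of $C$ is off $C$, producing a chord and contradicting (i).

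Finally, for \textbf{(iii)}, by (ii) and Lemma \ref{lem1} every vertex of $C$ lies in a triangle of the partition. I would then check that for each such triangle $\{x,y,z\}$ one has $|V(C)\cap\{x,y,z\}|\in\{0,2\}$: a lone vertex $x\in V(C)$ from the triangle has only one external neighbor, so cannot attain $d_O=2$; while having all three of $x,y,z$ on $C$ either forces $C$ to be the triangle $xyz$ (excluded) or leaves the edge $yz$ as a chord of $C$, contradicting (i). Consequently, if $C$ meets exactly $k$ triangles of the partition then $|V(C)|=2k$, so $|E(C)|=2k$ is even. The main obstacle I anticipate is the case analysis in (ii), in particular $S=\{a,b,c,d\}$, where several threading patterns of $C$ through the diamond must all be checked to confirm that a chord is always produced.
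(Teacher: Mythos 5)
Your proposal is correct and follows essentially the same route as the paper: parts (i) and (ii) rest on the observation that a vertex lying on a cycle of $G\setminus T$ has only its single remaining edge available to $T$, so a chord or a diamond traps a tree component away from the rest of $G$, and part (iii) follows by pairing the vertices of $C$ through triangle edges. The only noteworthy (and cosmetic) difference is that you invoke the partition of Lemma \ref{lem1} for the parity count, where the paper argues directly from claw-freeness on three consecutive vertices of $C$; your degree-profile bookkeeping also fills in, carefully and correctly, several steps the paper dismisses as ``easy to see.''
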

\begin{proof}
Assume that $C$ is not a triangle.
\begin{itemize}
\item[(i)] Suppose that $C$ contains two vertices $a$ and $b$ such that $ab$ is a chord in $C$. Since $d_G(a)=d_G(b)=3$, the spanning tree $T$ does not contain $a$ and $b$, a contradiction. 
\item[(ii)] 
Suppose that (ii) is false. Since $G$ is cubic, the cycle $C$ should pass through of $3$ or $4$ vertices of a diamond. It is easy to see that the spanning tree $T$ does not contain some vertices of this diamond, a contradiction. 
\item[(iii)]
 Clearly, every vertex of a claw-free cubic graph lies in a triangle. Assume that $a_1$, $a_2$ and $a_3$ are three consecutive vertices in $C$ such that $a_1a_2a_3$ is a path. By (i), $a_1$ and $a_3$ are not adjacent. Thus it is easy to see that two consecutive vertices of every three consecutive vertices of $C$ lie in a triangle. Note that there is no vertex in $C$ lying in a diamond. So there are $|V(C)|/2$ edges belonging to triangles of $G$ and hence $|V(C)|$ is even.
 \end{itemize} 
\end{proof}
\begin{lem}
\label{lem2.3}
Let $G$ be a connected claw-free cubic graph with the good decomposition $\{T, M, O\}$. If $C\neq K_3$ is a cycle in the $2$-regular subgraph $O$, then for the fix edge $xy$ of $C$, we can find another good decomposition such that $xy$ belongs to either $T$ or $M$.
\end{lem}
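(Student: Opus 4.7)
The plan is to use Lemmas~2.1 and~2.2 to determine the local structure around $C$, and then to modify the decomposition $\{T,M,O\}$ by an explicit edge-swap that moves $xy$ out of $O$. Since $C\neq K_3$, Lemma~2.2 gives that $C$ is chordless, even, of length $2k\geq 4$, with no vertex of any diamond of $G$; so by Lemma~2.1, every $v\in V(C)$ lies in a unique triangle $T_v$ of $G$. Because the two $C$-edges at $v$ exhaust its $O$-degree of $2$, we must have $d_T(v)=1$ and $d_M(v)=0$; hence $v$ is a leaf of $T$, its third (non-$C$) neighbour is a ``cap'' vertex $w_v\notin V(C)$, and $v w_v\in T$. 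A short analysis using chordlessness then shows that the edges of $C$ strictly alternate between triangle edges of $G$ and non-triangle edges.

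I would split on whether $xy$ is a triangle edge of $G$. In the triangle case, let $z$ be the common cap, so $\{x,y,z\}$ is a triangle of $G$ with $xz, yz\in T$; since $x,y$ are leaves of $T$ and $T$ spans $G$, the edge $zz'$ from $z$ to its third neighbour $z'$ must also lie in $T$, else $T$ could not reach $V(G)\setminus\{x,y,z\}$. The naive rotation at $\{x,y,z\}$, namely
\[
T' = (T\setminus\{xz,yz\})\cup\{xy\}, \quad O' = (O\setminus\{xy\})\cup\{xz,yz\}, \quad M'=M,
\]
converts $C$ into an $O$-cycle of length $2k+1$, an odd non-triangle violating Lemma~2.2, and simultaneously leaves $T'$ short by one edge. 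To correct the parity I would perform a second, parallel rotation at a neighbouring triangle $T_j=\{v_{2j-1},v_{2j},w_j\}$ of $C$, which lengthens the new $O$-cycle to $2k+2$. The non-triangle case, in which $xy$ joins two different triangles of $G$, is handled analogously by rotating the two triangles of $G$ adjacent to $xy$ along $C$.

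The main obstacle is restoring the edge-count and spanning-tree property of $T'$ after the double rotation: each rotation costs $T$ two edges while returning only one, so $T'$ ends up short by two edges, which must be recovered by promoting two $M$-edges to $T$. The natural candidates are the third-edges $zz'$ and $w_j w_j'$ of the two caps, but the leaf analysis above forces both of these already into $T$, so neither is recyclable. Resolving this requires tracing $T$ further outward from $z$ and $w_j$ and applying Lemma~2.1 to $z', w_j'$ (each of which lies in a triangle or a diamond of $G$); this either yields an $M$-edge on the frontier of the detached tree-component that can be promoted to $T$, or else an additional rotation at a further triangle of $C$ is needed to push the deficit along. Termination of this iterative process is guaranteed by the finiteness of $G$ and the acyclicity of $T$. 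I expect this boundary analysis to be the technical heart of the proof.
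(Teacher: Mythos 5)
Your structural setup is correct and matches what the paper uses implicitly: every vertex of $C$ is a leaf of $T$ whose third edge goes to the apex of its unique triangle, the edges of $C$ alternate between triangle edges and connecting edges, and each apex is attached to the rest of $T$ by its own third edge (forced, since the two cycle vertices of its triangle are leaves). You have also correctly diagnosed why the naive rotation at $\{x,y,z\}$ fails. The gap is in your proposed repair. After that rotation, the detached component of $T'$ is exactly $\{x,y\}$, and \emph{every} edge of $G$ leaving $\{x,y\}$ lies in $O'$: the two legs $xz,yz$ and the two connecting edges of $C$ at $x$ and $y$. By your own degree count ($d_M(v)=0$ for all $v\in V(C)$, and every cap's third edge already in $T$), there is no $M$-edge anywhere on this frontier, so ``promoting $M$-edges to $T$'' can never reconnect $\{x,y\}$; promoting an $M$-edge elsewhere in the graph merely creates a cycle inside the surviving component. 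The only edges that could reconnect $\{x,y\}$ are edges of $C$ itself, and moving one of them into $T'$ destroys $2$-regularity at its other endpoint, forcing a correction at the next triangle, and so on around the whole cycle. Thus the iterative local process you sketch cannot terminate early; the repair is necessarily a single global reassignment of the entire cluster consisting of $C$ and its triangles.

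That global reassignment is the paper's actual proof, and it contains a parity dichotomy that your proposal has no mechanism for. Index the triangles along $C$ as $C_1,\dots,C_m$. The paper re-partitions the whole cluster at once: alternate triangles go wholly into $O'$ (their legs leave $T$), the remaining triangles keep their legs in $T'$ and send their $C$-edge to $M'$, and the connecting edges of $C$ go into $T'$. Counting tree edges shows this balances with \emph{all} connecting edges in $T'$ only when $m$ is even, in which case the fixed connecting edge lands in $T$; when $m$ is odd, the fixed connecting edge must instead be placed in $M'$, with the two triangles adjacent to it treated specially. This parity split is exactly why the lemma promises only ``$T$ or $M$,'' and any correct proof must contain it. Finally, the case where $xy$ is a triangle edge is handled more simply than by your double rotation: the paper fixes an adjacent connecting edge of $C$, runs the construction, and chooses the indexing so that the triangle containing $xy$ is one whose $C$-edge ends up in $M'$ (or, in the odd case, in $T'$), so $xy$ again avoids $O$.
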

\begin{proof}
Assume that $C$  is a cycle in $O$ containing $xy$. By Lemma \ref{lem2}, $C$ is an even cycle. For the convenience, let us recall $x$ by $b_1$ and $y$ by $a_2$ and call every triangle $a_it_ib_i$ with one edge on $C$ by $C_i$. If $b_1a_2$ does not lie in a triangle, then there are two distinct triangles $C_1$ and $C_2$ such that $b_1$ belongs to $C_1$ and $a_2$ belongs to $C_2$. Note that $t_i$ is not adjacent to $t_j$  for $i, j$, where $1\leq i, j\leq |V(C)|/2$, otherwise we have a spanning forest instead of a spanning tree in $G$. Now, assume that the number of triangles with one edge on $C$ is $2k$. We change the decomposition of $G$ such that $b_1a_2$ lies in the spanning tree as follows. We consider $T'=T\cup \{b_1a_2, b_{2k}a_1, a_{2i}t_{2i}, t_{2i}b_{2i}, b_ja_{j+1}|\ i=1,\ldots, k,\  j=1,\ldots,2k-1\}$, $M'=M\cup\{a_{2i}b_{2i}| \ i=1,\ldots, k\}$ and $O'= (O\setminus\{b_1a_2\})\cup \{C_{2i-1}| \ i=1,\ldots, k\}$ as a new good decomposition into a spanning tree, a matching and a $2$-regular subgraph of $G$.  
If the number of triangles with one edge on $C$ is $2k-1$, then similar to the above argument, we change the decomposition of $G$ such that $b_1a_2$ lies in a matching.
We consider $T' =T \cup \{ a_1b_1, t_1b_1, a_2t_2, a_2b_2, b_{2k-1}a_1, a_{2i}t_{2i}, t_{2i}b_{2i}, b_ja_{j+1} |\ i=2,\ldots , k-1,\  j=2,\ldots ,2k-2\}$,  $M'=M \cup \{a_1t_1, b_1a_2, t_2b_2, a_{2i}b_{2i}|\ i=2,\ldots , k-1\}$ and  $O' = (O\setminus\{b_1a_2\}) \cup \{C_{2i-1}| \ i=2,\ldots , k \}$  as a new good decomposition of $G$. See Figure \ref{ from the left side: Case 6, Cycle C with 6 triangle on it ;  Case 6, Cycle C with 5 triangle on it}(b).\\
Now, Assume that $b_1a_2$ lies in a triangle. Then consider one of incident edges with $xy$ on cycle $C$ and fix it. Then we apply the above method. We can see at once that $xy$ belongs to $T$ or $M$.
\begin{figure}[h!]
\centering
\includegraphics[scale=0.5]{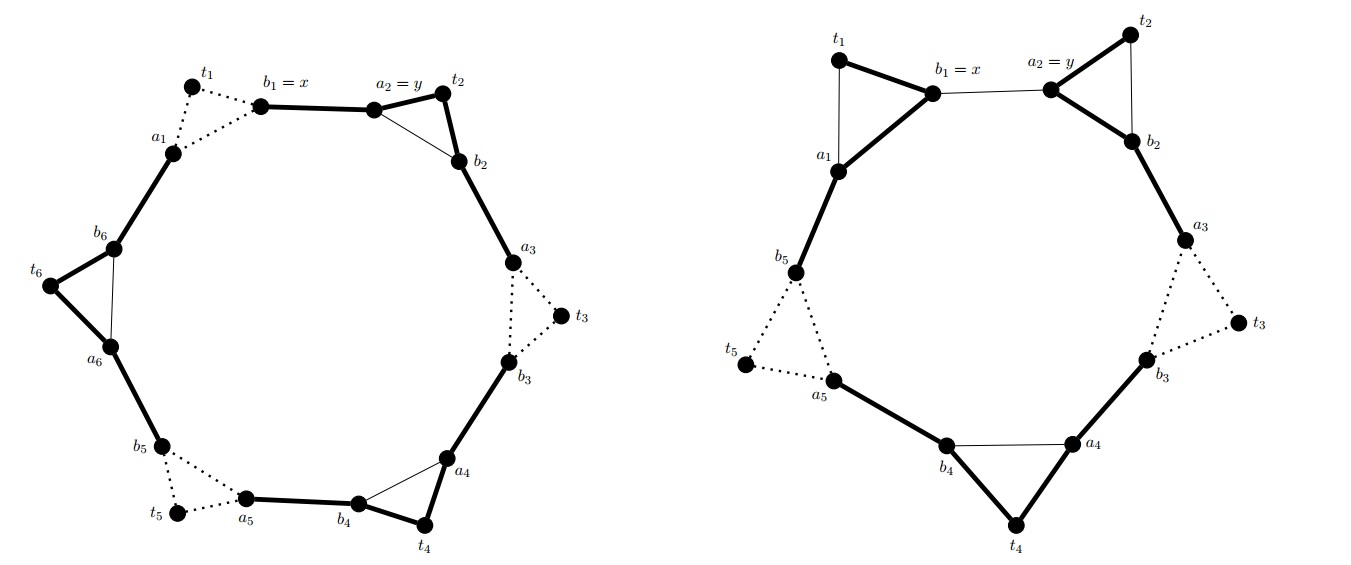}
\caption{\footnotesize {From the left side: a, b.}}
 \label{ from the left side: Case 6, Cycle C with 6 triangle on it ;  Case 6, Cycle C with 5 triangle on it}
\end{figure}
\end{proof}
\section{Proof of the Main Theorem}
\begin{proof}
We proceed by induction on the order $|V(G)|=n$ of $G$.
Obviously, both $K_4$ and the Cartesian product $C_3\square K_2$ have the desired decompositions.
Now, we divide the proof into two steps:\\
\textbf{Step 1.} Let $G$ be a claw-free cubic graph of order $n$ with no diamond.\\
Let $n\geq 8$ and the hypothesis holds for all claw-free cubic graphs of order less than $n$. Assume that $v_1 v_2 v_3$ is a triangle in $G$. Let $u_1$ and $u_2$ be the other neighbours of $v_1$ and $v_2$, respectively. If $u_1$ and $u_2$ are adjacent, then since $G$ is  claw-free, $u_1$ and $u_2$ have a common neighbour $u_3$. If $u_3$ and $v_3$ are adjacent, then $G$ is the Cartesian product $C_3\square K_2$ and it has a good decomposition. So,  assume that $x\neq u_3$ is a neighbour of $v_3$ and $y\neq v_3$ is a neighbour of $u_3$. It is easy to see that $x\neq y$, otherwise there is a claw in $x$. If $x$ and $y$ are not adjacent, then we put $A =\{u_1, u_2, u_3, v_1, v_2, v_3\}$ and consider $G\setminus A$. Join $x$ to $y$ and call the resulting claw-free cubic graph $G^A$. Since $|V(G^A)|<|V(G)|$, by induction hypothesis $G^A$ has a good decomposition $\{T^A , M^A, O^A\}$, where $T^A$ is a spanning tree of $G^A$, $M^A$ is a matching and $O^A$ is a $2$-regular subgraph of $G^A$. We extend this decomposition of $G^A$ to $G$. It is clear that $G^A$ is connected, so we have the following three cases:
\begin{itemize}
\item[i.] If $xy$ lies in the spanning tree $T^A$, then since the spanning tree $T^A$ saturates vertices $x$ and $y$, we can consider $T=(T^A\setminus\{xy\}) \cup \{xv_3, v_3v_1, v_1v_2, v_2u_2, u_2u_1\\,u_1u_3, u_3y \}$ and $M= M^A\cup\{v_3v_2, v_1u_1, u_2u_3\}$ as a spanning tree and a matching of $G$, respectively and we keep the cycles in the good decomposition of $G^{A}$, see Figure \ref{22} (a).
\item[ii.] If $xy$ lies in the matching $M^A$, then we define $T=T^A \cup\{xv_3, v_1v_3, v_2v_3, u_1u_3,\\u_2u_3,u_3y \}$, $M= M^A\setminus \{xy\}$ and $O= O^A \cup\{v_1v_2u_2u_1\}$ as a good decomposition of $G$, see Figure \ref{22} (b).
\end{itemize}
Before we discuss the third case, we need the following claim.\\
\\
\textbf{Claim 1.} Let $C$ be a cycle in $G^A$ containing $xy$, then $|V(C)|>3$.  
\label{Claim 1}
\begin{proof}
Assume that $|V(C)|=3$, then there is $t_1\in N_{G^A}(x)\cap N_{G^A}(y)$. Let $a_1\in N_{G^A}(x)\setminus \{t_1\}$ and $a_2\in N_{G^A}(y)\setminus \{t_1, a_1\}$. Note that if $a_1$ and $a_2$ are same, then there are claws in $G$. Since $G$ is cubic, $t_1$ can be adjacent to at most one of the two vertices $a_1$ and $a_2$.
Therefore $G$ has at least one claw containing $x$ or $y$, a contradiction. So $|V(C)|>3$ and in particular $xy$ is not in a triangle in $G^A$.
\end{proof}
\begin{itemize}
\item[iii.] If $xy$ lies in the cycle, then by Claim 1, $|V(C)|>3$ and hence by Lemma \ref{lem2.3}, this case will be converted to either Case $i$ or Case $ii$.
\end{itemize}
\begin{figure}[h!]
\centering
\includegraphics[scale=0.5]{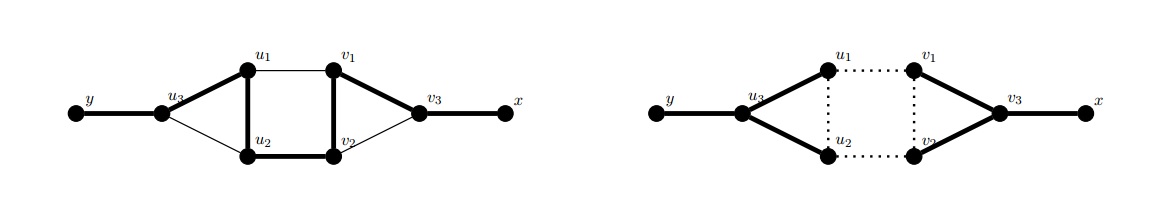}
 \caption{\footnotesize {From the left side: a; b.}}
 \label{22}
\end{figure}
Now, if $x$ and $y$ are adjacent, then for avoiding of a claw, there is $b\in N_G(x)\cap N_G(y)$, see Figure \ref{ddd}. Note that triangle $v_1v_2v_3$ join to another triangle $xyb$ by the edge $v_3x$. We investigate this case in the below by replacing $x$ with $u_3$, $y$ with $u_1$ and $b$ with $u_2$.\\
\begin{figure}[h!]
\centering
\includegraphics[scale=0.5]{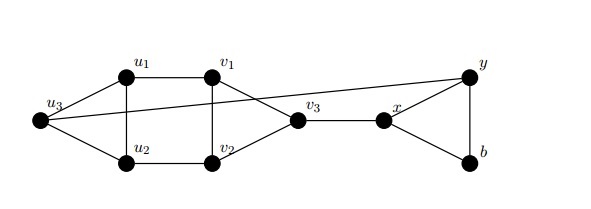}
 \caption{\footnotesize {If $x$ and $y$ are adjacent.}}
\label{ddd}
\end{figure}
Now, assume that triangle $v_1v_2v_3$ join to another triangle $u_1u_2u_3$ by the edge $u_3v_3$. Let $x, y, w$ and $z$ be the neighbours of $v_1, v_2, u_1$ and $u_2$, respectively. If either $x=u_1$ or $x=u_2$, then we have done before. 
 Without loss of generality, we can assume that $x, y, w$ and $z$ are different from $u_1, u_2, v_1$ and $v_2$.  Since there is no diamond in $G$, $x\neq y$ and $w\neq z$. If $x$ and $y$ or $w$ and $z$ are adjacent, then we have done before. So, assume that $x$ and $y$ ($w$ and $z$) are not adjacent. If there is $t\in N_G(x)\cap N_G(y)$, then there is a claw in vertices $x$ or $y$. If $x=w$ or $x=z$, then we have a claw in $x$. So, assume that $x$, $y$, $w$ and $z$ are not adjacent. Put $B= \{u_1, u_2, u_3, v_1, v_2, v_3\}$ and consider $G\setminus B$. Join $x$ to $y$ and $w$ to $z$, we denote the new graph $G^{B}$. It is obvious that $G^B$ is a claw-free cubic graph and $|V(G^B)|<|V(G)|$.

If $G$ is $2$-edge connected, then $G^B$ is connected. So by induction hypothesis $G^B$ has a good decomposition $\{T^B, M^B, O^B\}$, where $T^B$ is a spanning tree of $G^B$, $M^B$ is a matching and $O^B$ is a $2$-regular subgraph. We extend this decomposition of $G^B$ to $G$. The proof falls into six cases:
\begin{itemize}
\item[Case 1.] If $xy$ and $wz$ belong to $M^B$, then since the spanning tree $T^B$ saturates vertices $x, y, w$ and $z$, consider $T=T^B \cup \{xv_1, v_1v_2, v_2v_3, v_3u_3, wu_1, u_2z \}$, $M=(M^B\setminus\{xy, wz\})\cup \{v_1v_3, v_2y\}$ and $O= O^B\cup \{u_1u_2u_3\}$ as a good decomposition of $G$, see Figure \ref{ from the left side: a, b, c, d} (a).
\item[Case 2.] If $xy$ and $wz$ belong to $T^B$, then we can consider
$T=(T^B\setminus \{xy, wz\}) \cup \{xv_1, v_1v_3, v_3v_2, v_2y, wu_1, u_1u_3, u_3u_2, u_2z \}$, $M= M^{B}\cup\{v_1v_2, u_1u_2, u_3v_3\}$ and $O= O^B$ as a good decomposition of $G$, see Figure \ref{ from the left side: a, b, c, d} (b).
\item[Case 3.] If $xy\in T^B$ and $wz\in M^B$, then we consider $T=(T^B\setminus \{xy\}) \cup \{xv_1, v_1v_2, v_2v_3,\\v_2y, v_3u_3, wu_1, u_2z \}$, $M=(M^B\setminus\{wz\})\cup \{v_1v_3\}$ and $O= O^B\cup \{ u_1u_2u_3\}$ as a good decomposition of $G$.
\item[Case 4.] If $xy\in T^B$ and $wz\in O^B$, then we can consider $T=(T^B\setminus \{xy\}) \cup \{xv_1, v_1v_3, v_3v_2, v_2y, v_3u_3, u_3u_1, u_3u_2\}$, $M=M^B\cup\{v_1v_2\}$ and  $O= (O^B\setminus \{wz\})\cup \{wu_1,u_1u_2,u_2z\}$ as a good decomposition of $G$, see Figure \ref{ from the left side: a, b, c, d} (c).
\item[Case 5.] If $xy\in M^B$ and $wz\in O^B$, then we have $T=T^B \cup \{xv_1,v_1v_2, v_2v_3, v_3u_3, u_3u_1,\\ u_3u_2 \}$, $M=(M^B\setminus \{xy\})\cup \{v_1v_3, v_2y\}$ and $O= (O^B\setminus \{wz\})\cup \{wu_1,u_1u_2,u_2z\}$ as a good decomposition of $G$, see Figure \ref{ from the left side: a, b, c, d} (d).
\begin{figure}[h!]
\centering
\includegraphics[scale=0.5]{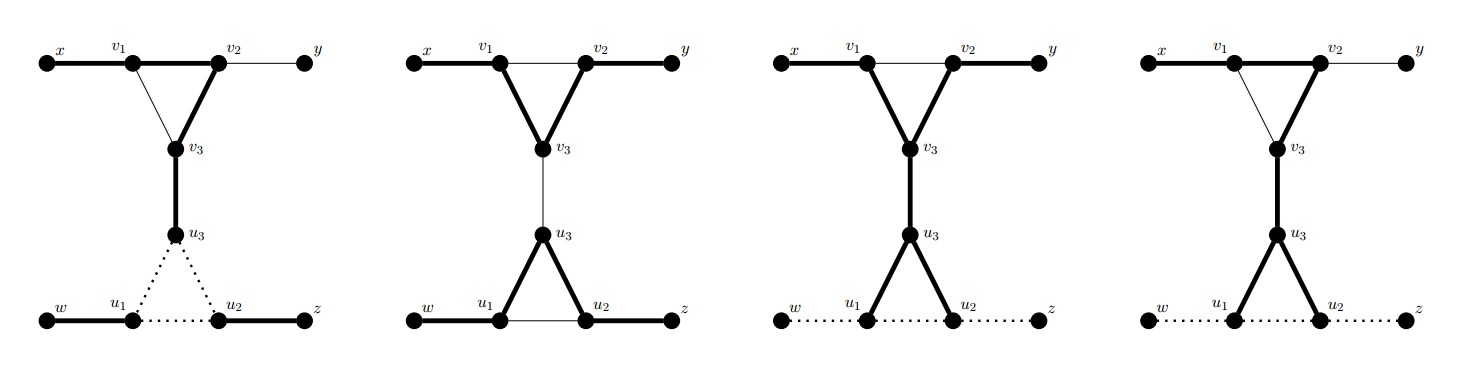}
 \caption{ \footnotesize {From the left side: a, b, c, d.}}
 \label{ from the left side: a, b, c, d}
\end{figure}
\item[Case 6.] If $xy$ and $wz$ belong to $O^B$, then similar to that in the proof of Claim 1 we have $C\neq K_3$. So by Lemma \ref{lem2.3}, this case will be converted to either Case $4$ or Case $5$.
\end{itemize}
Now, assume that $G^B$ is not connected. Let ${G_1^B}$ and $G_2^B$ be the connected components of $G^B$. So by induction hypothesis ${G_1^B}$ and ${G_2^B}$ have good decompositions $\{T_1^B, M_1^B, O_1^B\}$ and $\{T_2^B, M_2^B, O_2^B\}$. From this we want to find a new decomposition $\{T^{B}, M^B, O^B\}$ of $G^B$, where $T^{B}$ is a spanning forest, $M$ is a matching and $O^B$ is a $2$-regular subgraph. Define $T^{B} = T_1^B\cup T_2^B$, $M^B = M_1^B\cup M_2^B$ and $O^B = O_1^B\cup O_2^B$. For extending this decomposition of $G^B$ to $G$, we make slight changes in some cases for when $G^B$ is connected. Now, we investigate those cases here. We remove $xy$ and $wz$ from the decomposition of $G^B$. In Case $1$, a good decomposition of $G$ is obtained by adding the set of edges $\{xv_1, v_1v_2, v_2v_3, v_3u_3, u_3u_2, u_2u_1, u_1w\}$ to $T^{B}$ and $\{v_1v_3, u_1u_3, v_2y, u_2z\}$ to $M^{B}\setminus \{xy, wz\}$. In Case $2$, we add the set of edges $\{xv_1, v_1v_3, v_2v_3, v_2y, v_3u_3, wu_1, u_1u_3, u_3u_2, u_2z \}$ to $T^{B}\setminus \{xy, wz\}$ and $\{v_1v_2, u_1u_2\}$ to $M^{B}$ to obtain a spanning tree and a matching in a good decomposition of $G$. We keep the cycles in the good decomposition of $G^B$. In Case $3$, by adding the set of edges $\{xv_1, v_1v_2, v_2y, v_2v_3, v_3u_3, u_3u_2, u_2u_1, u_1w\}$ to $T^{B}\setminus \{xy\}$ and $\{v_1v_3, u_1u_3, u_2z\}$ to $M^{B}\setminus \{wz\}$, we obtain a good decomposition of $G$.
Other cases are similar to Cases 4-6 that the graph $G$ is $2$-edge connected.\\
Note that by applying the proof of Claim $1$, we can deduce that $G^{B}$ has no diamond.\\
\textbf{Step 2.} Let $G$ be a claw-free cubic graph of order $n\geq8$ with at least one string which contains $k\geq 1$ diamonds.
First, assume that $G$ is a diamond-necklace with $k\geq 2$ diamonds. Because $G$ is hamiltonian, according to the proof of  Theorem $9$ in \cite{Akbari}, $G$ has the desired decomposition.
Now, assume that $G$ is a claw-free cubic graph of order $n$ containing a string of at least two consecutive diamonds. Without loss of generality, consider a string of the maximum number of diamonds with starting and ending vertices $w$ and $z$. It is easy to check that $w\neq z$. By Lemma \ref{lem1}, $w$ and $z$ belong to triangles or diamonds. If at least one of $w$ or $z$ belongs to a diamond, then it leads to a contradiction with our assumption on the maximum number of diamonds in string. Now, we consider the following cases:
\begin{itemize}
\item [a.] Let $w$ and $z$ belong to a common triangle. That is, we have a diamond-bracelet $B_{k}$ in $G$.
\item [b.]  Let $w$ and $z$ belong to two distinct triangles.
\end{itemize}
 
Since the proof of both cases are similar, we prove only part (a) and the proof of part (b) is left to the reader.
we consider $G\setminus \{a_i, b_i, c_i, d_i\ |\ i=2,\ldots, k\}$ and then join $d_1$ to $z$. Let us call this new graph $G'$. By induction hypothesis, $G'$ has a good decomposition $\{T', M', O'\}$. We extend this good decomposition to $G$.
 If $d_1z$ lies in the spanning tree $T'$, then we define $T=T'\cup  \{a_ib_i, b_ic_i, c_id_i| \ 2\leq i\leq k\}\cup\{d_ja_{j+1}| \ 1\leq j\leq k-1\} \cup \{ d_kz\}$, $M=M'\cup \{a_ic_i, b_id_i |\ 2\leq i\leq k\}$ and we keep the cycles in the good decomposition of $G'$. If $d_1z$ lies in the matching $M'$, then we define $T=T'\cup \{ d_kz, b_2d_2, c_2d_2\}\cup  \{a_ib_i, b_ic_i, c_id_i| \ 3\leq i\leq k\}\cup\{d_ja_{j+1}| \ 1\leq j\leq k-1\}$,  $M=M'\cup \{a_ic_i, b_id_i |\ 3\leq i\leq k\}$ and $O= O'\cup \{a_2b_2c_2\}$, see Figure \ref{$d_1z$ lies in the spanning tree $T′$ or on the matching $M'$.}. Now, let us consider the case in which $d_1z$ lies in a cycle. Since $G$ is cubic, the edge $wa_1$ lies in every possible cycle containing $d_1z$ in the decomposition of $G'$. It is easy to check that of $a_1, b_1, c_1$ and $d_1$ are not covered by the spanning tree of $G'$, a contradiction.\\
\begin{figure}[h!]
\centering
\includegraphics[scale=0.5]{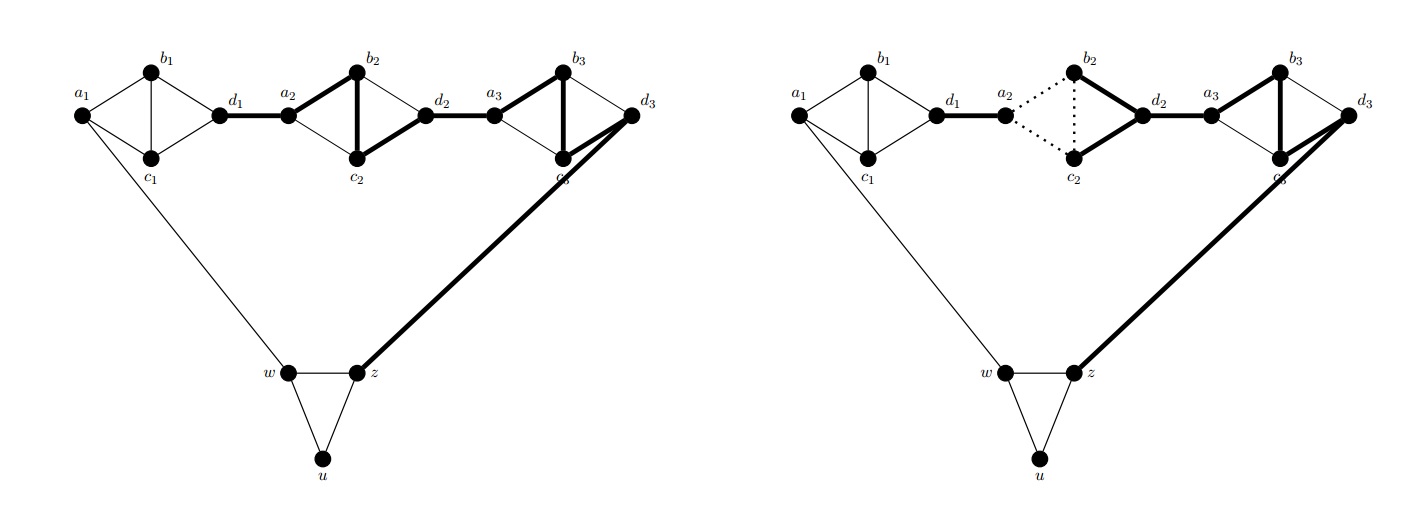}
 \caption{\footnotesize {From the left side: a) $d_1z$ lies in the spanning tree $T'$; b) $d_1z$ lies in the matching $M'$.}}
 \label{$d_1z$ lies in the spanning tree $T′$ or on the matching $M'$.}
\end{figure}
Here, we examine the case in which $G$ has only one diamond. First, assume that $w$ and $z$ lie in two distinct triangles. Remove the diamond and the edges $wa_1, d_1z$, then join $w$ to $z$ and call this new cubic graph $G'$. The graph $G'$ is a claw-free cubic graph and 
$|V(G')| < |V (G)|$. So by Step 1, $G'$ has a good decomposition $\{T', M', O'\}$. If $wz$ lies in the spanning tree $T'$, then we define $T=T'\cup \{wa_1, a_1b_1, b_1c_1, c_1d_1, d_1z\} $, $M=M'\cup \{a_1c_1, b_1d_1\}$ and we keep the cycles in the good decomposition of $G'$. If $wz$ lies in the matching $M'$, then we define $T=T'\cup\{wa_1, a_1b_1, a_1c_1, d_1z\} $, $O= O'\cup \{b_1c_1d_1\}$ and we keep the matching in the good decomposition of $G'$, see Figure \ref{ from the left side: $wz$ lies in the spanning tree the matching ;  $wz$ lies in the matching}. Assume that $wz$ lies in a cycle $C$, by Step $1$, the graph $G'$ has a good decomposition $\{T', M', O'\}$. It is obvious that $|V(C)|>3$. So by Lemma \ref{lem2.3}, we can convert this case to previous cases, that is the cases that $wz$ lies in either a spanning tree or a matching.

\begin{figure}[h!]
\centering
\includegraphics[scale=0.5]{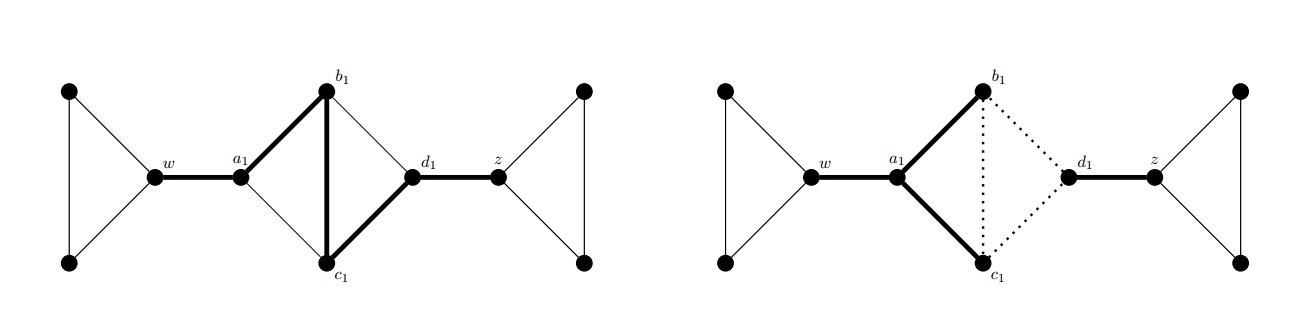}
 \caption{\footnotesize {From the left side: a) $wz$ lies in the spanning tree $T'$ ; b) $wz$ lies in the matching $M'$.}}
\label{ from the left side: $wz$ lies in the spanning tree the matching ;  $wz$ lies in the matching}
\end{figure}
\begin{figure}[h!]
\centering
\includegraphics[scale=0.5]{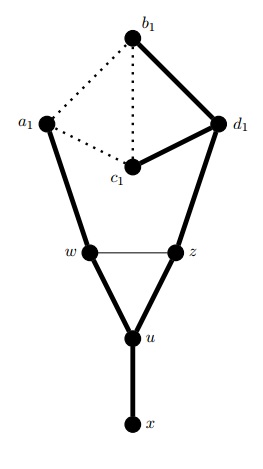}
 \caption{\footnotesize {Only one diamond, $w$ and $z$ lie in a common triangle.}}
 \label{Only one diamond, $w$ and $z$ lie in a common triangle}
\end{figure}
Now, assume that $w$ and $z$ are on a common triangle. Assume that $u$ is  a common neighbour of $w$ and $z$. Let another adjacent vertex to $u$ be $x$. It is obvious that $xu$ is a cut-edge of $G$. Assume that $G^x$ is a connected component of $G\setminus\{xu\}$ containing $x$. Since $G^x$ is a connected claw-free subcubic graph, by \cite[Theorem 2.1]{aaa}, the graph $G^x$ has a good decomposition $\{T^x, M^x, O^x\}$. Now, we extend this good decomposition to $G$.  Add the set of edges $\{xu, uw, uz, wa_1, zd_1, d_1c_1, d_1b_1\}$ to $T^x$, $wz$ to $M^x$ and the triangle $a_1b_1c_1$ to $O^x$ to obtain a good decomposition into the spanning tree,  the matching and the $2$-regular subgraph of $G$, see Figure \ref{Only one diamond, $w$ and $z$ lie in a common triangle}.
\end{proof}
\begin{remark}
{\rm Note that in the main theorem, the matching $M$ can be empty. We can see at once that the main theorem does not hold for subcubic graphs, a cycle is a counterexample.}
\end{remark}
\proof[Acknowledgements]
The authors would like to thank Professor Saieed Akbari for a discussion on this topic and for his comments.

\end{document}